 \newtheorem{thm}{Theorem}[section]
 \newtheorem{cor}[thm]{Corollary}
 \newtheorem{Opp}[thm]{Open Problem}
 \theoremstyle{definition}
 \newtheorem{defn}[thm]{Definition}
 \theoremstyle{remark}
 \numberwithin{equation}{section}
\begin{document}
%
%
%
%
%
%
%
%
%

\title[On the stability of the modified entropy equation]
 {On the stability of the modified entropy\\ equation}
\author[Eszter Gselmann]{Eszter Gselmann}

\address{
Institute of Mathematics\\
University of Debrecen\\
P.~O.~Box: 12.\\
Debrecen\\
Hungary\\
H--4010}

\email{gselmann@math.klte.hu}

\thanks{This research has been supported by the Hungarian Scientific Research Fund
(OTKA) Grant NK 68040 and also by the Universitas Scholarship founded by the K\&H Bank
(Kereskedelmi \'{e}s Hitelbank Rt.).}

\subjclass{Primary 39B82; Secondary 94A17}

\keywords{Stability, entropy, modified entropy equation, Hyers--Ulam stability,
parametric fundamental equation of information, associativity equation}

\date{\today}

\begin{abstract}
The aim of this paper is to prove that under some conditions
the modified entropy equation is
stable on its one--dimensional domain.
\end{abstract}

\maketitle

\section{Introduction}

The basic problem in the stability theory of functional equations is
whether an approximate solution of a functional equation
can be approximated by a
solution of the equation in question.
In case of an affirmative answer to the previous problem we say that
the investigated functional equation is \emph{stable}.
About the stability theory of functional equations one can read e.g.,
in Czerwik \cite{Cze02}, Forti \cite{For95}, Hyers--Isac--Rassias \cite{HIR98},
Jung \cite{Jun01}, Moszner \cite{Mos04} Rassias \cite{Ras03}.

In this paper we investigate the above problem concerning
the functional equation
\begin{equation}\label{m_eq}
f(x, y, z)=f(x, y+z, 0)+(y+z)^{\alpha}f\left(0, \frac{y}{y+z}, \frac{z}{y+z}\right),
\end{equation}
where $x, y, z$ are positive real numbers and $\alpha$ is a given real number.
Equation \eqref{m_eq} is a special case of the
so--called \emph{modified entropy equation},
\begin{equation}\label{Eq_me_eq}
f(x, y, z)=f(x, y+z, 0)+\mu(y+z)f\left(0, \frac{y}{y+z}, \frac{z}{y+z}\right),
\end{equation}
where $\mu$ is a given multiplicative function defined on the positive cone of $\mathbb{R}^{k}$ and
\eqref{Eq_me_eq} is supposed to hold for all elements $x, y, z$ of the above mentioned cone
and all operations on vectors are to be understood componentwise.
The symmetric solutions of equation \eqref{Eq_me_eq} were determined in \cite{Gse08}
(see also \cite{AGMS08}).

Throughout this paper we will use the following notations
\[
\mathbb{R}_{+}=\left\{x\in\mathbb{R} \mid x\geq 0\right\} \quad
\text{and} \quad
\mathbb{R}_{++}=\left\{x\in\mathbb{R} \mid x>0\right\},
\]
where $\mathbb{R}$ denotes the set of the real numbers and $\mathbb{N}$ stands for the set
of the positive integers.

By a real interval we always mean a subinterval of positive length of $\mathbb{R}$.
Furthermore, in case $U$ and $V$ are real intervals, then their sum
\[
U+V=\left\{u+v \mid u\in U, v\in V\right\}
\]
is obviously a real interval, as well.

In the subsequent sections the definition of logarithmic and multiplicative functions
will be needed. (See also Kuczma \cite{Kuc85} and Rad\'{o}--Baker \cite{RB87}).

\begin{defn}
Let $I\subset \mathbb{R}_{++}$ be a set and
\[
\mathcal{I}=\left\{(x, y)\in\mathbb{R}^{2}_{++}\mid x, y, xy\in I\right\}.
\]
A function $\mu:I\rightarrow\mathbb{R}$ is called multiplicative on
$\mathcal{I}$ if
\[
\mu(xy)=\mu(x)\mu(y)
\]
holds for all $(x, y)\in \mathcal{I}$.

Furthermore, we say that a function $l:I\rightarrow\mathbb{R}$
is logarithmic on $\mathcal{I}$ if
\[
l(xy)=l(x)+l(y)
\]
holds for all $(x, y)\in\mathcal{I}$.
\end{defn}

\section{Preliminaries}
During the proof of our main result the stability of a simple associativity
equation will be used which is contained in the following.
\begin{thm}\label{Thm2.1}
Let $U, V, W$ be real intervals,
$A:(U+V)\times W\rightarrow\mathbb{R}$,
$B:U\times (V+W)\rightarrow\mathbb{R}$
and suppose that
\begin{equation}\label{Eq1.1}
\left|A(u+v, w)-B(u, v+w)\right|\leq \varepsilon
\end{equation}
holds for all $u\in U$, $v\in V$ and $w\in W$.
Then there exists a function $\varphi:U+V+W\rightarrow\mathbb{R}$ such that
\begin{equation}\label{Eq1.2}
\left|A(p, q)-\varphi(p+q)\right|\leq 2\varepsilon \quad \left(p\in (U+V), q\in W\right)
\end{equation}
and
\begin{equation}\label{Eq1.3}
\left|B(t, s)-\varphi(t+s)\right|\leq \varepsilon \quad \left(t\in U, s\in (V+W)\right)
\end{equation}
hold.
\end{thm}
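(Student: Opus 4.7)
My plan is to build $\varphi$ as a ``slice'' of $A$ obtained by fixing its second coordinate to a distinguished point $w_{0}\in W$, and then to verify the two inequalities by applying \eqref{Eq1.1} directly (to get the sharp $B$--bound) and by combining two such applications via the triangle inequality (to get the $A$--bound with an extra factor of~$2$).

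Concretely, fix $w_{0}\in W$ and set
\[
\varphi(r) := A(r - w_{0},\, w_{0})
\]
whenever $r - w_{0} \in U+V$. Since every $r \in U+V+W$ is of the form $u+v+w$ with $(u,v,w)\in U\times V\times W$, the point $w_{0}$ can always be chosen (adaptively, as $w_{0}=w$ for such a decomposition) so that $r-w_{0}\in U+V$; hence $\varphi$ is defined on all of $U+V+W$.

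To derive \eqref{Eq1.3}, fix $(t,s)\in U\times(V+W)$ and write $s=v+w_{0}$ with $v:=s-w_{0}\in V$. Applying \eqref{Eq1.1} to the triple $(t,v,w_{0})\in U\times V\times W$ yields
\[
\bigl|A(t+v,\,w_{0})-B(t,\,v+w_{0})\bigr|\le\varepsilon,
\]
and since $t+v=(t+s)-w_{0}$ and $v+w_{0}=s$, this is exactly $|\varphi(t+s)-B(t,s)|\le\varepsilon$.

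To derive \eqref{Eq1.2}, fix $(p,q)\in(U+V)\times W$ and decompose $p=u+v$ with $u\in U$, $v\in V$. Applying \eqref{Eq1.1} to $(u,v,q)$,
\[
\bigl|A(p,q)-B(u,\,v+q)\bigr|\le\varepsilon.
\]
Now set $v':=v+q-w_{0}$ (aiming for $v'\in V$) and apply \eqref{Eq1.1} to $(u,v',w_{0})$:
\[
\bigl|A(u+v',\,w_{0})-B(u,\,v'+w_{0})\bigr|\le\varepsilon.
\]
Because $v'+w_{0}=v+q$ and $u+v'=p+q-w_{0}$, the second line reads $|\varphi(p+q)-B(u,v+q)|\le\varepsilon$, and the triangle inequality gives $|A(p,q)-\varphi(p+q)|\le 2\varepsilon$.

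\textbf{Main obstacle.} The delicate point is guaranteeing that the auxiliary elements $v=s-w_{0}$ and $v'=v+q-w_{0}$ actually lie in the interval~$V$, so that the hypothesis \eqref{Eq1.1} can indeed be invoked. Since $V$, $W$ are arbitrary intervals this may fail for a single fixed $w_{0}$; the remedy is to allow $w_{0}=w_{0}(r)$ to depend on $r$ (exploiting the fact that $r\in U+V+W$ always admits a decomposition supplying a suitable $w_{0}$), and to check that competing admissible choices of $w_{0}$ yield the same value of $\varphi(r)$ up to the permitted tolerance --- which again follows from two applications of \eqref{Eq1.1}. Once this interval bookkeeping is in place, the two displayed bounds drop out from the two-step diagram chase above.
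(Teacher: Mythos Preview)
Your framework coincides with the paper's --- define $\varphi(r)$ as a slice $A(r-w_{0},w_{0})$ and read off both bounds from \eqref{Eq1.1} --- and you correctly locate the difficulty. But the remedy you sketch does not close the gap, for two reasons.

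First, letting $w_{0}=w_{0}(r)$ vary with $r$ does not rescue the sharp $B$--bound. For a fixed $r$, the pairs $(t,s)\in U\times(V+W)$ with $t+s=r$ let $s$ range over an interval that can be strictly longer than $V$ (take e.g.\ $V$ short and $U,W$ long). Hence there is in general no single $w_{0}(r)\in W$ with $s-w_{0}(r)\in V$ for every such $s$, so your derivation of \eqref{Eq1.3} fails for some $(t,s)$. Falling back on ``a different admissible $w_{0}$ would have worked'' only gives $|\varphi(t+s)-B(t,s)|\le 3\varepsilon$, not $\varepsilon$.

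Second, the consistency claim --- that any two admissible $w_{0},w_{0}'$ give values $A(r-w_{0},w_{0})$, $A(r-w_{0}',w_{0}')$ within $2\varepsilon$ via two uses of \eqref{Eq1.1} --- is itself only valid when $|w_{0}-w_{0}'|$ does not exceed the length of $V$: to compare them through a common $B(u,r-u)$ you need one $u\in U$ with both $r-w_{0}-u$ and $r-w_{0}'-u$ in $V$, and no such $u$ need exist if $w_{0},w_{0}'$ are far apart in $W$.

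The paper handles exactly this: it first reduces to compact $U,V,W$, partitions $W$ into overlapping subintervals $W^{(k)}$ each of length less than $|V|$, builds $\varphi$ on each $U+V+W^{(k)}$ from the two endpoint slices $A(\cdot-w_{i,k},w_{i,k})$ (which \emph{are} then comparable in two steps), patches across the overlaps, and finally exhausts non-compact intervals by compact ones. That subdivision-and-patching is precisely the ``interval bookkeeping'' you set aside, and it is the substance of the argument; without it the constants $2\varepsilon$ and $\varepsilon$ in \eqref{Eq1.2}--\eqref{Eq1.3} are not attained.
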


\begin{proof}
First we prove that inequalities \eqref{Eq1.2} and \eqref{Eq1.3} hold for
 compact intervals. Therefore suppose that $U, V, W$ are compact real intervals, that is,
\[
U=[u_{1}, u_{2}], \quad V=[v_{1}, v_{2}] \quad \text{and} \quad W=[w_{1}, w_{2}].
\]
Choose the intervals
\[
W^{(k)}=[w_{1, k}, w_{2, k}] \quad \left(k=1, \ldots, N\right)
\]
such that
\begin{equation}\label{Eq1.4}
\max_{1\leq k\leq N}\left(w_{2, k}-w_{1, k}\right) < v_{2}-v_{1},
\end{equation}
\begin{equation}\label{Eq1.5}
\bigcup^{N}_{k=1}W^{(k)}=W
\end{equation}
and
\begin{equation}\label{Eq1.6}
w_{1, k}<w_{1, k+1}< w_{2, k}<w_{2, k+1}
\end{equation}
should hold.

For $i\in\left\{1, 2\right\}$ let
\begin{equation}\label{Eq1.7}
\varphi_{i, k}\left(\xi\right)=A\left(\xi-w_{i, k}, w_{i, k}\right).  \quad \left(\xi \in U+V+w_{i, k}\right)
\end{equation}
Then
\[
\left|\varphi_{i, k}(t+s)-B(t, s)\right|
=\left|A\left(t+s-w_{i, k}, w_{i, k}\right)-B(t, s)\right| \leq \varepsilon
\]
holds for all $t\in U$, $s\in V+w_{i, k}$ and $i\in\left\{1, 2\right\}$.

Therefore
\begin{multline}\label{Eq1.8}
\left|\varphi_{1, k}(t+s)-\varphi_{2, k}(t+s)\right| \\
\leq \left|\varphi_{1, k}(t+s)-B(t, s)\right|+
\left|\varphi_{2, k}(t+s)-B(t, s)\right| \leq 2\varepsilon
\end{multline}
if $t\in U$ and $s\in (V+w_{1, k})\cap (V+w_{2, k})$.

Due to \eqref{Eq1.6}
\[
(V+w_{1, k})\cap (V+w_{2, k})=
[v_{1}+w_{2, k}, v_{2}+w_{1, k}]
\]
and so
\[
U+\left((V+w_{1, k})\cap (V+w_{2, k})\right)=
\left(U+V+w_{1, k}\right)\cap \left(U+V+w_{2, k}\right).
\]
Thus by \eqref{Eq1.8}
\[
\left|\varphi_{1, k}(\xi)-\varphi_{2, k}(\xi)\right| \leq 2\varepsilon
\]
if $\xi\in \left(U+V+w_{1, k}\right)\cap \left(U+V+w_{2, k}\right)$
and $k\in \left\{1, \ldots, N\right\}$.
Let
\[
\varphi_{k}(\xi)=
\left\{
\begin{array}{lcl}
\varphi_{1, k}(\xi), &\text{if}& \xi\in \left(U+V+w_{1, k}\right)\setminus \left(U+V+w_{2, k}\right) \\
\varphi_{2, k}(\xi), &\text{if}& \xi\notin \left(U+V+w_{1, k}\right)\setminus \left(U+V+w_{2, k}\right).
\end{array}
\right.
\]
The function $\varphi_{k}$ is defined on $U+V+W^{(k)}$, since by \eqref{Eq1.4}
\[
\left(U+V+w_{1, k}\right)\cup \left(U+V+w_{2, k}\right)=U+V+W^{(k)}.
\]
Additionally,
\[
\left|\varphi_{k}(t+s)-B(t, s)\right|\leq \varepsilon
\]
for all $t\in U$, $s\in V+W^{(k)}$ and
$k\in \left\{1, \ldots, N\right\}$.
This shows that
\[
\left|\varphi_{k}(\xi)-\varphi_{k+1}(\xi)\right|\leq 2\varepsilon,
\]
if $\xi\in \left(U+V+W^{(k)}\right)\cap \left(U+V+W^{(k+1)}\right)$, and
$k\in \left\{1, \ldots, N-1\right\}$.
Define the function $\varphi$ on $U+V+W$ by
\[
\varphi(\xi)=\left\{
\begin{array}{lcl}
\varphi_{k}(\xi), &\text{if} &
\xi\in \left(U+V+W^{(k})\right)\setminus \bigcup^{N}_{k\neq i=1} \left(U+V+W^{(i)}\right) \\
\varphi_{\max\left\{i, j\right\}}(\xi), &\text{if} &
\xi\in \left(U+V+W^{(i)}\right)\cap \left(U+V+W^{(j)}\right), i, j\in \left\{1, \ldots, N\right\}.
\end{array}
\right.
\]
Then
\begin{equation}\label{Eq1.9}
\left|\varphi(t+s)-B(t, s)\right|\leq \varepsilon
\end{equation}
holds for all $t\in U$ and $s\in V+W$.

Let now $p\in U+V$ and $q\in W$. Then
$p=u+v$ with certain $u\in U$ and $v\in V$, therefore
$A(p, q)=A(u+v, q)$ and inequality \eqref{Eq1.1} yields that
\[
\left|A(u+v, q)-B(u, v+q)\right|\leq \varepsilon,
\]
furthermore, due to \eqref{Eq1.9}
\[
\left|B(u, v+q)-\varphi(u+v+q)\right|\leq \varepsilon,
\]
therefore
\begin{multline}\label{Eq1.10}
\left|A(p, q)-\varphi(p+q)\right|\\
\leq \left|A(u+v, q)-B(u, v+q)\right|+\left|B(u, v+q)-\varphi(u+v+q)\right|\\
\leq 2\varepsilon
\end{multline}
holds for all $p\in U+V$ and $q\in W$.

Finally, we will prove that \eqref{Eq1.2} and \eqref{Eq1.3} hold also
in case the intervals $U, V$ and $W$ are not necessarily compact.

Indeed, there exist sequences $(U_{n}), (V_{n})$ and $(W_{n})$ of compact
real intervals such that
\[
U_{n}\subset U_{n+1}, \quad V_{n}\subset V_{n+1} \quad \text{and}
\quad W_{n}\subset W_{n+1}
\]
for all $n\in \mathbb{N}$ and
\[
U=\bigcup^{\infty}_{n=1}U_{n}, \quad V=\bigcup^{\infty}_{n=1}V_{n} \quad
\text{and} \quad W=\bigcup^{\infty}_{n=1}W_{n}.
\]
From the previous part of the proof we get that, for all
$n\in\mathbb{N}$, there exists a function
$\varphi_{n}:U_{n}+V_{n}+W_{n}\rightarrow\mathbb{R}$ such that
\[
\left|A(p, q)-\varphi_{n}(p+q)\right|\leq 2\varepsilon \quad
\left(p\in U_{n}+V_{n}, q\in W_{n}\right)
\]
and
\[
\left|B(t, s)-\varphi_{n}(t+s)\right|\leq \varepsilon .
\quad
\left(t\in U_{n}, s\in V_{n}+W_{n}\right)
\]
Let
\[
\varphi(x)=\varphi_{1}(x),
\]
if $x\in U_{1}+V_{1}+W_{1}$ and
\[
\varphi(x)=\varphi_{n}(x)
\]
if $x\in \left(U_{n}+V_{n}+W_{n}\right)\setminus \left(U_{n-1}+V_{n-1}+W_{n-1}\right)$ for all
$n\in\mathbb{N}$.

Then the function $\varphi$ is well--defined on the set $U+V+W$ and
inequalities \eqref{Eq1.2} and \eqref{Eq1.3} hold, indeed.
\end{proof}
With the choice $\varepsilon_{1}=\varepsilon_{2}=0$, we get the following theorem.
Nevertheless, it was proved in Maksa \cite{Mak00}.
\begin{cor}
Let $U, V$ and $W$ be real intervals,
$A:(U+V)\times W\rightarrow\mathbb{R}$,
$B:U\times (V+W)\rightarrow\mathbb{R}$ and suppose that
\[
A(u+v, w)=B(u, v+w)
\]
holds for all $u\in U, v\in V$ and $w\in W$.
Then there exists a function $\varphi:U+V+W\rightarrow\mathbb{R}$
such that
\begin{equation}
A(p, q)=\varphi(p+q)
\end{equation}
for all $p\in U+V$ and $q\in W$ and
\begin{equation}
B(t, s)=\varphi(t+s)
\end{equation}
for all $t\in U$ and $s\in V+W$.
\end{cor}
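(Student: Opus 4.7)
The plan is to deduce this corollary as the immediate $\varepsilon = 0$ specialization of Theorem~\ref{Thm2.1}. The assumption $A(u+v,w) = B(u,v+w)$ is precisely inequality \eqref{Eq1.1} with $\varepsilon = 0$, so the theorem furnishes a function $\varphi\colon U+V+W\to\mathbb{R}$ satisfying $|A(p,q)-\varphi(p+q)| \leq 0$ and $|B(t,s)-\varphi(t+s)| \leq 0$. Both of these bounds collapse to the desired equalities $A(p,q) = \varphi(p+q)$ and $B(t,s) = \varphi(t+s)$, so no further work is required.

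If one wishes to present a self-contained argument, the construction inside the proof of Theorem~\ref{Thm2.1} becomes transparent in the exact case: fix any $w_{0}\in W$ and set $\varphi(\xi) = A(\xi - w_{0}, w_{0})$ for $\xi\in U+V+w_{0}$, in analogy with \eqref{Eq1.7}. The exact associativity gives $\varphi(t+s) = B(t,s)$ for $t\in U$ and $s\in V+w_{0}$, and the same identity is used to check that for two choices $w_{1,k}, w_{2,k}$ the two locally defined functions $\varphi_{1,k}, \varphi_{2,k}$ agree on overlaps rather than merely differ by $2\varepsilon$. Patching along a covering of $W$ (and then exhausting non-compact $U, V, W$ by compact subintervals) yields a globally defined $\varphi$ on $U+V+W$ simultaneously representing $A$ and $B$.

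There is essentially no obstacle: the only substantive step is the patching/extension argument, and it is already carried out in the proof of Theorem~\ref{Thm2.1}. I would therefore simply invoke that theorem with $\varepsilon = 0$.
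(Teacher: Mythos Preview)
Your proposal is correct and matches the paper's own approach: the corollary is obtained by applying Theorem~\ref{Thm2.1} with $\varepsilon=0$, whereupon inequalities \eqref{Eq1.2} and \eqref{Eq1.3} become the asserted equalities. The additional self-contained sketch you give is a faithful specialization of the construction in the proof of Theorem~\ref{Thm2.1} and is not needed for the argument.
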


The following three theorems say that the parametric fundamental equation of information is
hyperstable in case the parameter is negative, stable in case the parameter is zero and
it is superstable in case the parameter is positive but does not equal to one,
respectively.

\begin{thm}\label{Thm2.2} \cite{Gse09}
Let $\alpha, \varepsilon\in\mathbb{R}$, $\alpha<0$, $\varepsilon\geq 0$
and $f:]0, 1[\rightarrow\mathbb{R}$ be a function.
Assume that
\begin{equation}
\left|f(x)+(1-x)^{\alpha}f\left(\frac{y}{1-x}\right)-
f(y)-(1-y)^{\alpha}f\left(\frac{x}{1-y}\right)\right|\leq \varepsilon
\end{equation}
holds for all
$(x, y)\in D^{\circ}=\left\{(x, y)\in\mathbb{R}^{2}\mid x, y, x+y\in ]0, 1[\right\}$.
Then, and only then, there exist $a, b\in\mathbb{R}$ such that
\begin{equation}
f(x)=ax^{\alpha}+b(1-x)^{\alpha}-b
\end{equation}
for all $x\in ]0, 1[$.
\end{thm}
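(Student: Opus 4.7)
The direction $(\Leftarrow)$ reduces to a direct computation: every $f(x)=ax^{\alpha}+b(1-x)^{\alpha}-b$ solves the parametric fundamental equation of information \emph{exactly}, so the assumed inequality trivially holds with $\varepsilon=0$. For the forward implication my plan proceeds in three steps: (i)~recast the hypothesis as an inequality of associativity type via a homogenising substitution, (ii)~invoke Theorem~\ref{Thm2.1} to obtain classical Hyers--Ulam stability (approximation of $f$ by an exact solution up to a bounded error), and (iii)~use the sign assumption $\alpha<0$ to eliminate that bounded error and obtain the asserted exact form.

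For step (i) I would introduce the auxiliary two-variable function
\[
F(u,v):=(u+v)^{\alpha}f\!\left(\tfrac{u}{u+v}\right),\qquad u,v>0,
\]
and substitute $x=u/(u+v+w)$, $y=v/(u+v+w)$ into the hypothesis. Using $(1-x)^{\alpha}=(v+w)^{\alpha}/(u+v+w)^{\alpha}$ and $(1-y)^{\alpha}=(u+w)^{\alpha}/(u+v+w)^{\alpha}$ and then multiplying by $(u+v+w)^{\alpha}$, one obtains
\[
\bigl|F(u,v+w)+F(v,w)-F(v,u+w)-F(u,w)\bigr|\leq (u+v+w)^{\alpha}\varepsilon,\qquad u,v,w>0.
\]
On any compact sub-box of $\mathbb{R}_{++}^{3}$ the right-hand side is uniformly bounded, and the rearrangement $F(u,v+w)-F(u,w)\approx F(v,u+w)-F(v,w)$ brings the inequality into a form to which Theorem~\ref{Thm2.1} can be applied.

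In step (ii), Theorem~\ref{Thm2.1} supplies a function $\varphi$ satisfying the analogues of~\eqref{Eq1.2} and~\eqref{Eq1.3}, and the Corollary following Theorem~\ref{Thm2.1} identifies the general shape of $F$, and hence of $f$, modulo a uniformly bounded perturbation. Matching with the classical exact solutions of the parametric fundamental equation of information (well known to be $ax^{\alpha}+b(1-x)^{\alpha}-b$ for $\alpha<0$) produces constants $a,b\in\mathbb{R}$ such that $g(x):=f(x)-\bigl(ax^{\alpha}+b(1-x)^{\alpha}-b\bigr)$ is bounded on $]0,1[$.

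Step (iii) is where $\alpha<0$ is used decisively. Since the subtracted model is an exact solution and the hypothesis is linear in $f$, the bounded function $g$ satisfies the same approximate equation with the same $\varepsilon$. Fix $r\in\,]0,1[$ and apply this inequality at $x=r\delta$, $y=1-\delta$ for small $\delta>0$: the pair lies in $D^{\circ}$ because $x+y=1-(1-r)\delta<1$, and as $\delta\to 0^{+}$ three of the four $g$-terms, namely $g(r\delta)$, $(1-r\delta)^{\alpha}g((1-\delta)/(1-r\delta))$ and $g(1-\delta)$, remain bounded, while the fourth one equals $\delta^{\alpha}g(r)$, whose modulus tends to $+\infty$ (this is where $\alpha<0$ forces $\delta^{\alpha}\to+\infty$) unless $g(r)=0$. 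Hence $g\equiv 0$ on $]0,1[$, and $f$ has the claimed form. I expect the main obstacle to lie in step (ii): identifying the correct re-parametrisation that genuinely places the $F$-inequality under the hypothesis of Theorem~\ref{Thm2.1}, since, as written, its left-hand side does not split cleanly as a function of $(u+v,w)$ against a function of $(u,v+w)$ without further manipulation.
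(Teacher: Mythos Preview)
First a framing remark: the present paper does not actually prove Theorem~\ref{Thm2.2}; it is quoted from \cite{Gse09} and used as a black box. So there is no in-paper proof to compare against, and I assess your proposal on its own merits.

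Your overall architecture---establish ordinary Hyers--Ulam stability (step~(ii)), then exploit $\alpha<0$ to force the bounded defect to vanish (step~(iii))---is exactly the right shape, and your step~(iii) is correct and is the genuine hyperstability mechanism. The substitution $x=r\delta$, $y=1-\delta$ isolates the single term $\delta^{\alpha}g(r)$ with a coefficient that blows up while the other three terms stay bounded; this forces $g(r)=0$ for every $r\in\,]0,1[$. That argument is clean and would be the heart of any proof.

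The real gap is step~(ii), and it is not merely a matter of finding the right re\-para\-metri\-sation. Theorem~\ref{Thm2.1} concerns inequalities of the shape $|A(u+v,w)-B(u,v+w)|\le\varepsilon$, i.e.\ a \emph{two}-term associativity comparison. Your $F$-inequality
\[
\bigl|F(u,v+w)+F(v,w)-F(v,u+w)-F(u,w)\bigr|\le (u+v+w)^{\alpha}\varepsilon
\]
is a four-term symmetric-cocycle condition, equivalent to the approximate symmetry in $(u,v)$ of $\Phi(u,v,w):=F(u,v+w)+F(v,w)$. No change of variables turns this into the $A/B$ format of Theorem~\ref{Thm2.1}: the two ``extra'' terms $F(v,w)$ and $F(u,w)$ cannot be absorbed into functions of $u+v$ and $w$ (or of $u$ and $v+w$) alone. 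Consequently Theorem~\ref{Thm2.1} simply does not apply here, and step~(ii) as written does not produce the bounded $g$ that step~(iii) needs.

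What closes the gap is an independent stability theorem for the parametric fundamental equation of information when $\alpha\neq 0,1$, of the same flavour as Theorem~\ref{Thm2.4} but valid for negative $\alpha$ as well (and proved by direct manipulations of the equation, not via Theorem~\ref{Thm2.1}). Once one has constants $a,b$ with $\bigl|f(x)-[ax^{\alpha}+b(1-x)^{\alpha}-b]\bigr|\le C(\alpha)\varepsilon$ on $]0,1[$, your step~(iii) upgrades this to equality. In short: replace your appeal to Theorem~\ref{Thm2.1} by a citation (or reproof) of the $\alpha<0$ stability estimate for the parametric fundamental equation; the rest of your plan then goes through.
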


\begin{thm}\label{Thm2.3} \cite{GM08}
Let $\varepsilon\geq 0$
and $f:]0, 1[\rightarrow\mathbb{R}$ be a function.
Assume that
\begin{equation}
\left|f(x)+f\left(\frac{y}{1-x}\right)-
f(y)-f\left(\frac{x}{1-y}\right)\right|\leq \varepsilon
\end{equation}
holds for all $(x, y)\in D^{\circ}$.
Then there exists a logarithmic function
$l: ]0,1[\rightarrow\mathbb{R}$ and $c\in \mathbb{R}$ such that
\begin{equation}\label{Eq2.3}
\left|f(x)-\left[l(1-x)+c\right]\right|\leq 63 \varepsilon
\end{equation}
holds for all $x\in ]0,1[$.
\end{thm}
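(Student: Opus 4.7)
The plan is to extract a logarithmic function from the approximate symmetry (2.2) in two stages: first reduce (2.2) to an associativity--type inequality suitable for Theorem~\ref{Thm2.1}, then use the auxiliary function produced by that theorem to derive an approximate Cauchy equation and invoke the Hyers--Ulam stability of Cauchy's equation.

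\emph{Reformulation.} With $F(t):=f(1-t)$ and the substitution $a=1-x$, $b=1-y$, $\rho=a+b-1$, a direct calculation (based on $1-y/(1-x)=(1-x-y)/(1-x)$) rewrites (2.2) as
\[
|F(a)+F(\rho/a)-F(b)-F(\rho/b)|\leq\varepsilon
\]
for all $a,b\in\,]0,1[$ with $a+b>1$. Both pairs $(a,\rho/a)$ and $(b,\rho/b)$ have product $\rho$, so this expresses the approximate invariance of the quantity $F(u)+F(v)$ along the hyperbola $uv=\rho$.

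\emph{Associativity and Cauchy stability.} Passing to logarithmic coordinates $G(s):=F(e^{s})$ turns the constraint $uv=\rho$ into $s+t=\log\rho$. I would then choose compatible subintervals $U,V,W\subset\,]-\infty,0[$ and define $A$, $B$ out of $G$ so that the transformed inequality assumes the form $|A(u+v,w)-B(u,v+w)|\leq\varepsilon$ of (1.1). Theorem~\ref{Thm2.1} supplies a function $\varphi:U+V+W\to\mathbb{R}$ with $|G(s)+G(t)-\varphi(s+t)|\leq 2\varepsilon$ on the admissible region. Specializing one argument to a reference point converts this bound into an approximate Cauchy equation for $G$; the classical Hyers--Ulam theorem then yields an additive function $a$ and a constant $\gamma$ with $|G(\sigma)-a(\sigma)-\gamma|\leq C\varepsilon$ on the admissible range. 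Setting $l(x):=a(\log x)$ produces a logarithmic function on $\,]0,1[$, and inverting the substitutions yields $|f(x)-[l(1-x)+c]|\leq K\varepsilon$ for some absolute constant $K$.

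\emph{Main obstacles.} The qualitative claim follows by combining Theorem~\ref{Thm2.1} with Hyers--Ulam stability, but two technical issues require care. First, the domain constraint $a+b>1$ becomes nonlinear in logarithmic coordinates, so the rectangular hypothesis $U\times V\times W$ of Theorem~\ref{Thm2.1} must be verified by localizing to small compatible intervals and then invoking the chaining construction from the proof of Theorem~\ref{Thm2.1} to patch the local $\varphi$'s into a globally defined one on all of $\,]0,1[$. Second, each triangle--inequality step contributes an additive constant, so hitting the sharp value $63$ (rather than merely some linear bound in $\varepsilon$) is a matter of careful bookkeeping at every stage of the reduction.
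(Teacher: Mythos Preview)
The paper does not prove Theorem~\ref{Thm2.3}: it is quoted as a preliminary result from \cite{GM08} and used as a black box in the proof of Theorem~\ref{Thm3.1}. There is therefore no ``paper's own proof'' to compare your attempt against.

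As for your sketch on its own merits, the substitution $F(t)=f(1-t)$, $a=1-x$, $b=1-y$, $\rho=a+b-1$ is correct and does yield
\[
|F(a)+F(\rho/a)-F(b)-F(\rho/b)|\le\varepsilon,\qquad a,b\in\,]0,1[,\ a+b>1,\ \rho=a+b-1.
\]
However, your next claim---that this expresses approximate invariance of $F(u)+F(v)$ along the hyperbola $uv=\rho$---overstates what you have. For fixed $\rho$ the admissible pairs are tied together by the extra relation $a+b=1+\rho$, so you can only directly compare two points $(u,\rho/u)$ and $(u',\rho/u')$ when $u+u'=1+\rho$; in logarithmic coordinates this constraint is $e^{s}+e^{s'}=1+e^{\sigma}$, which is neither the rectangular hypothesis of Theorem~\ref{Thm2.1} nor a simple sum condition. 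You flag this as an ``obstacle'' but do not indicate how to overcome it, and in fact casting the inequality in the form $|A(u+v,w)-B(u,v+w)|\le\varepsilon$ is not straightforward here. The actual argument in \cite{GM08} proceeds rather differently (via a Cauchy-difference reduction specific to the $\alpha=0$ fundamental equation of information), and the constant $63$ comes from the bookkeeping in that particular chain of estimates, not from the associativity lemma. Your outline, as written, does not reach a proof; at best it motivates why a logarithmic function should appear.
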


\begin{thm}\label{Thm2.4} \cite{Gse09a}
Let $\alpha, \varepsilon\in\mathbb{R}$, $1\neq\alpha>0$, $\varepsilon\geq 0$
and $f:]0, 1[\rightarrow\mathbb{R}$ be a function.
Assume that
\begin{equation}
\left|f(x)+(1-x)^{\alpha}f\left(\frac{y}{1-x}\right)-
f(y)-(1-y)^{\alpha}f\left(\frac{x}{1-y}\right)\right|\leq \varepsilon
\end{equation}
holds for all
$(x, y)\in D^{\circ}$.
Then there exist $a, b\in\mathbb{R}$ such that
\begin{equation}
\left|f(x)-\left[ax^{\alpha}+b(1-x)^{\alpha}-b\right]\right|\leq K(\alpha)\varepsilon
\end{equation}
holds for all $x\in ]0, 1[$, where
\[
K(\alpha)=
\left|2^{1-\alpha}-1\right|^{-1}
\left(3+12\cdot2^{\alpha}+\frac{32\cdot3^{\alpha+1}}{\left|2^{-\alpha}-1\right|}\right).
\]
\end{thm}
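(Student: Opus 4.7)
The plan is to follow the standard template for proving superstability of the parametric fundamental equation of information when $1\neq\alpha>0$: identify candidate coefficients $a,b$ from a few specific values of $f$, subtract the corresponding exact solution, and then estimate the resulting defect by iteration on carefully chosen dyadic points.

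Let $D(x,y)$ denote the expression inside the absolute value on the left-hand side of the hypothesis, so that $|D(x,y)|\leq\varepsilon$ on $D^{\circ}$. Since $x\mapsto Ax^{\alpha}+B(1-x)^{\alpha}-B$ is an exact solution of the parametric fundamental equation of information for every $A,B\in\mathbb{R}$, the function
\[
g(x)=f(x)-\bigl[Ax^{\alpha}+B(1-x)^{\alpha}-B\bigr]
\]
satisfies the same $\varepsilon$--inequality as $f$. The first step is to pick $a,b$ in terms of two pivot values of $f$ (for instance $f(1/2)$ and $f(1/3)$; since $\alpha\neq 1$ the resulting $2\times 2$ linear system for $g(1/2)=g(1/3)=0$ is non-singular), so that the normalised $g$ vanishes on a chosen two-point set.

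With $g$ vanishing at the pivots, I would plug $x=1/2$ and related dyadic values into the defect inequality and let $y$ vary. This reduces the three-variable inequality to a one-variable recursion of the shape $|g(y)|\leq 2^{\alpha}|g(\sigma(y))|+c\,\varepsilon$, where $\sigma$ is a halving-type map. Iterating this recursion produces a geometric series with ratio $2^{\alpha-1}$ (equivalently $2^{1-\alpha}$), whose summation is exactly what yields the front factor $|2^{1-\alpha}-1|^{-1}$ in $K(\alpha)$. A parallel estimate produced by substitutions with a $3$-to-$1$ rescaling accounts for the secondary factor $|2^{-\alpha}-1|^{-1}$ appearing inside the last summand of $K(\alpha)$.

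The final step is to transfer the pointwise bound from the controlled dyadic set to an arbitrary $x\in\,]0,1[$: for such $x$, substitute $(x,y)$ with $y$ already controlled into $|D(x,y)|\leq\varepsilon$ and use the bounds on $g$ already obtained at $y$, $y/(1-x)$, and $x/(1-y)$. The main obstacle will be the bookkeeping: every substitution adds its own $\varepsilon$--contribution, and one has to chain them so that the geometric ratio is exactly $2^{1-\alpha}$ (rather than something less favourable) while the accumulated absolute constants collapse to precisely $3+12\cdot 2^{\alpha}+32\cdot 3^{\alpha+1}/|2^{-\alpha}-1|$; the three summands there correspond to the three successive stages of the iteration described above.
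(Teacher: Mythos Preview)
The paper does not prove this theorem at all: it is quoted verbatim from \cite{Gse09a} and used as a black box in the proof of Theorem~\ref{Thm3.1}. There is therefore no ``paper's own proof'' to compare your proposal against.

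As for the proposal itself, it is a plausible high-level sketch of the standard superstability argument, but it is not a proof. Two concrete issues. First, your choice of pivots $f(1/2)$ and $f(1/3)$ to force $g(1/2)=g(1/3)=0$ is underspecified: the $2\times2$ system you allude to has determinant proportional to $2^{-\alpha}(1-2^{-\alpha})(3^{-\alpha}-2\cdot3^{-\alpha})$-type expressions, and you need to check non-singularity for \emph{all} $\alpha>0$, $\alpha\neq1$, not just assert it. Second, and more seriously, the heart of the matter is the recursion step ``$|g(y)|\leq 2^{\alpha}|g(\sigma(y))|+c\varepsilon$'' that you wave at: when $\alpha>1$ the multiplier $2^{\alpha}$ exceeds $1$, so a naive forward iteration diverges. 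The actual argument must run the recursion in the contracting direction (or exploit the symmetric form of the information equation so that one always divides rather than multiplies by a power of $2$), and this is exactly where the explicit constant $K(\alpha)$ with its $|2^{1-\alpha}-1|^{-1}$ and $|2^{-\alpha}-1|^{-1}$ factors is produced. Your outline names the right ingredients but does not show that the iteration closes with a bounded constant, which is the entire content of the theorem.
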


\section{The main result}
In view of the results of the last section we are able to prove our main theorem,
which is contained in the following.

\begin{thm}\label{Thm3.1}
Let $\alpha, \varepsilon\in\mathbb{R}$, $\alpha\neq 1, \varepsilon\geq 0$
and $f:\mathbb{R}^{3}_{+}\rightarrow\mathbb{R}$ be a function.
Assume that
\begin{equation}\label{Eq1.13}
\left|f(x, y, z)-f(x, y+z, 0)-(y+z)^{\alpha}f\left(0, \frac{y}{y+z}, \frac{z}{y+z}\right)\right|\leq \varepsilon_{1}
\end{equation}
and
\begin{equation}\label{Eq1.14}
\left|f(x, y, z)-f\left(\sigma(x), \sigma(y), \sigma(z)\right)\right|\leq \varepsilon_{2}
\end{equation}
hold for all $x, y, z\in\mathbb{R}_{++}$ and for all permutations
$\sigma:\left\{x, y, z\right\}\rightarrow\left\{x, y, z\right\}$.

Then, in case $\alpha<0$, there exist $a\in\mathbb{R}$ and a function
$\varphi_{1}:\mathbb{R}_{++}\rightarrow\mathbb{R}$ such that
\begin{equation}
\left|f(x, y, z)-\left[ax^{\alpha}+ay^{\alpha}+az^{\alpha}+\varphi_{1}(x+y+z)\right]\right|\leq
2\varepsilon_{1}+3\varepsilon_{2}
\end{equation}
holds for all $x, y, z\in\mathbb{R}_{++}$.

Furthermore, if $\alpha=0$, then there exists a function $\varphi_{2}:\mathbb{R}_{++}\rightarrow\mathbb{R}$
such that
\begin{equation}
\left|f(x, y, z)-\varphi_{2}(x+y+z)\right|\leq 191\varepsilon_{1}+1263\varepsilon_{2}
\end{equation}
holds for all $x, y, z\in\mathbb{R}_{++}$.

Finally, if $1\neq \alpha>0$, then for all $n\in\mathbb{N}$, there exists a function
$\psi_{n}:]0, 3n]\rightarrow\mathbb{R}$ such that
\[
\left|f(x, y, z)-\left[ax^{\alpha}+ay^{\alpha}+az^{\alpha}+\psi_{n}(x+y+z)\right]\right|
\leq c_{n}(\alpha)\varepsilon_{n}+d_{n}(\alpha)\varepsilon_{2}
\]
holds for all $x, y, z\in ]0, n]$, where
\[
c_{n}(\alpha)=2+7\cdot 2^{\alpha}n^{\alpha}K(\alpha)
\quad
\text{and}
\quad
d_{n}(\alpha)=4+7\cdot 2^{\alpha+2}n^{\alpha}K(\alpha).
\]
\end{thm}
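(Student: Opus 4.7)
The plan is to reduce the stability of the modified entropy equation to two stability problems already solved in Section~2: the stability of the parametric fundamental equation of information (Theorems~\ref{Thm2.2}, \ref{Thm2.3}, \ref{Thm2.4}, one for each sign of $\alpha$), and the stability of the simple associativity equation (Theorem~\ref{Thm2.1}, which will produce the function $\varphi$ of the sum $x+y+z$).

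The first step is to extract a function of one variable that satisfies the parametric fundamental equation of information approximately. Setting
\[
H(t):=f(0,t,1-t),\qquad t\in\,]0,1[\,,
\]
I would fix $(x,y)\in D^\circ$, set $z:=1-x-y>0$, apply \eqref{Eq1.13} to the triples $(x,y,z)$ and $(y,x,z)$, and combine them through the permutation estimate \eqref{Eq1.14} for the transposition swapping $x$ and $y$. After identifying the boundary values $f(t,1-t,0)$ with $H(t)$ via a further application of \eqref{Eq1.14} to the cyclic permutation $(t,1-t,0)\mapsto(0,t,1-t)$, this yields
\[
\bigl|H(x)+(1-x)^\alpha H\bigl(\tfrac{y}{1-x}\bigr)-H(y)-(1-y)^\alpha H\bigl(\tfrac{x}{1-y}\bigr)\bigr|\leq 2\varepsilon_1+3\varepsilon_2
\]
for every $(x,y)\in D^\circ$. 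Feeding this into the appropriate theorem of Section~2 with $\varepsilon=2\varepsilon_1+3\varepsilon_2$ determines the form of $H$: exactly $H(x)=ax^\alpha+b(1-x)^\alpha-b$ for $\alpha<0$ (Theorem~\ref{Thm2.2}); within $63\varepsilon$ of $l(1-x)+c$ for $\alpha=0$ (Theorem~\ref{Thm2.3}); within $K(\alpha)\varepsilon$ of $ax^\alpha+b(1-x)^\alpha-b$ for $1\neq\alpha>0$ (Theorem~\ref{Thm2.4}). The approximate symmetry $|H(x)-H(1-x)|\leq\varepsilon_2$ (a direct consequence of \eqref{Eq1.14}) then constrains the coefficients: when $\alpha<0$ it forces $a=b$, because $x^\alpha-(1-x)^\alpha$ is unbounded as $x\to 0^+$; when $\alpha\geq 0$ it yields only an approximate equality whose error must be absorbed later.

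The second step substitutes the resulting form of $H$ back into \eqref{Eq1.13}. Up to an error carrying a factor of $(y+z)^\alpha$, one has $(y+z)^\alpha H(y/(y+z))=a[y^\alpha+z^\alpha-(y+z)^\alpha]$. Setting
\[
g(x,y,z):=f(x,y,z)-a(x^\alpha+y^\alpha+z^\alpha),\qquad A_0(u,v):=f(u,v,0)-au^\alpha-av^\alpha,
\]
\eqref{Eq1.13} becomes $|g(x,y,z)-A_0(x,y+z)|\leq C_1$, while permutation symmetry of $f$ (inherited by $g$ and, up to a swap, by $A_0$) combines with the same inequality applied to $(z,y,x)$ to give an associativity-type estimate
\[
\bigl|A_0(u+v,w)-A_0(u,v+w)\bigr|\leq C_2,\qquad u,v,w\in\mathbb{R}_{++}.
\]
Theorem~\ref{Thm2.1} then supplies a function $\varphi:\mathbb{R}_{++}\to\mathbb{R}$ with $|A_0(t,s)-\varphi(t+s)|\leq C_2$, and combining with the bound on $|g-A_0|$ yields the announced estimates with $\varphi_1$, $\varphi_2$ or $\psi_n$ in the role of $\varphi$.

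The hard part is the superstability case $1\neq\alpha>0$. Because Theorem~\ref{Thm2.4} controls $H$ only up to an additive error and this error is amplified by the factor $(y+z)^\alpha$ upon insertion into \eqref{Eq1.13}, the resulting bound grows with $y+z$; this is precisely what forces the restriction to the bounded cube $]0,n]^3$ and is the source of the factor $n^\alpha K(\alpha)$ in $c_n(\alpha)$ and $d_n(\alpha)$. Moreover, the coefficient identification $a=b$ that was automatic when $\alpha<0$ must now be done approximately, and the resulting error has to be tracked carefully through the substitution and the subsequent application of Theorem~\ref{Thm2.1}. A similar but milder phenomenon arises when $\alpha=0$: the logarithmic correction $l(1-x)$ is not of the form $\mathrm{const}\cdot x^\alpha$, so extracting its contribution to $\varphi_2$ requires an additional round of the symmetry argument, which is what inflates the numerical constants (in particular the $1263$ in front of $\varepsilon_2$) in the conclusion.
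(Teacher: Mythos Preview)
Your proposal is correct and follows essentially the same strategy as the paper: reduce to the parametric fundamental equation of information for the one-variable section of $f$, invoke the appropriate stability result from Section~2 (with the symmetry forcing $a=b$ exactly when $\alpha<0$, the logarithmic term to vanish when $\alpha=0$, and only an approximate identification when $1\neq\alpha>0$), and then feed the remainder into Theorem~\ref{Thm2.1} to produce the function of $x+y+z$. The only differences are cosmetic---the paper works with $F(u,v)=f(0,u,v)$ rather than $f(u,v,0)$ and obtains $\varepsilon_1+4\varepsilon_2$ (instead of your $2\varepsilon_1+3\varepsilon_2$) as the bound in \eqref{Eq1.18}, so your bookkeeping will produce slightly different numerical constants than the stated ones, but the architecture of the argument is identical.
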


\begin{proof}
Assume that inequalities \eqref{Eq1.13} and \eqref{Eq1.14} hold.
Define the functions $F$ and $h$ on $\mathbb{R}^{2}_{++}$ and on $]0, 1[$
respectively, by
\[
F(u, v)=f(0, u, v) \quad \left(u, v \in\mathbb{R}_{++}\right)
\]
and by
\[
h(t)=F(1-t, t). \quad \left(t\in ]0, 1[\right)
\]

Then inequality \eqref{Eq1.14} immediately implies that
\[
\left|F(u, v)-F(v, u)\right|\leq \varepsilon_{2}
\quad \left(u, v \in\mathbb{R}_{++}\right)
\]
and
\[
\left|h(t)-h(1-t)\right|\leq \varepsilon_{2}
\quad \left(t\in ]0, 1[\right)
\]
holds.
Furthermore, in view of the definitions of the functions $F$ and $h$
\eqref{Eq1.13} yields that
\begin{equation}\label{Eq1.15}
\left|f(x, y, z)-F(x, y+z)-(y+z)^{\alpha}h\left(\frac{y}{y+z}\right)\right|\leq
\varepsilon_{1}+\varepsilon_{2}
\end{equation}
holds for all $x, y, z\in \mathbb{R}_{++}$.
Interchanging $x$ and $y$ in \eqref{Eq1.15} and using inequality
\eqref{Eq1.14} we obtain that
\begin{equation}\label{Eq1.16}
\left|F(x, y+z)+(y+z)^{\alpha}h\left(\frac{y}{y+z}\right)
-F(y, x+z)-(x+z)^{\alpha}h\left(\frac{x}{x+z}\right)\right|\leq \varepsilon_{1}+2\varepsilon_{2}
\end{equation}
for all $x, y, z\in \mathbb{R}_{++}$. If we substitute into this
$z=1-x-y$, where $(x, y)\in D^{\circ}$, then we get that
\begin{equation}\label{Eq1.17}
\left|F(x, 1-x)+(1-x)^{\alpha}h\left(\frac{y}{1-x}\right)-
F(y, 1-y)-(1-y)^{\alpha}h\left(\frac{x}{1-y}\right)\right|
\leq \varepsilon_{1}+2\varepsilon_{2}.
\end{equation}
Using two times the approximate symmetry, we get that
\begin{equation}\label{Eq1.18}
\left|h(x)+(1-x)^{\alpha}h\left(\frac{y}{1-x}\right)
-h(y)-(1-y)^{\alpha}h\left(\frac{x}{1-y}\right)\right|\leq \varepsilon_{1}+4\varepsilon_{2}
\end{equation}
holds for all $(x, y)\in D^{\circ}$.

Concerning the parameter $\alpha$, we have to distinguish three cases.
In the first case assume that $\alpha<0$.
Then due to Theorem \ref{Thm2.2}., there exist $a, b\in\mathbb{R}$ such that
\[
h(x)=ax^{\alpha}+b(1-x)^{\alpha}-b.
\quad
\left(x\in ]0, 1[\right)
\]
On the other hand inequality
\[
\left|h(t)-h(1-t)\right|\leq \varepsilon_{2}
\quad
\left(t\in ]0, 1[\right)
\]
implies that
\[
\left|(a-b)(x^{\alpha}-(1-x)^{\alpha})\right|\leq \varepsilon_{2},
\quad
\left(x\in ]0, 1[\right)
\]
that is,
\[
\left|a-b\right|\leq \frac{\varepsilon_{2}}{\left|x^{\alpha}-(1-x)^{\alpha}\right|}
\]
holds for all $x\in ]0, 1[$.
Taking the limit $x\rightarrow 0+$, we obtain that $a=b$, since $\alpha<0$.
Therefore
\[
h(x)=ax^{\alpha}+a(1-x)^{\alpha}-a,
\quad
\left(x\in ]0, 1[\right)
\]
in case $\alpha<0$.

Define the function $H$ on $\mathbb{R}^{2}_{++}$ by
\[
H(x, y)=F(x, y)-ax^{\alpha}-ay^{\alpha},
\quad \left(x, y\in\mathbb{R}_{++}\right)
\]
then
\begin{multline*}
\left|H(x, y+z)-H(y, x+z)\right| \\
=\left|F(x, y+z)-ax^{\alpha}-a(y+z)^{\alpha}
-F(y, x+z)+ay^{\alpha}+a(x+z)^{\alpha}\right| \\
=\left|F(x, y+z)+ay^{\alpha}+az^{\alpha}-a(y+z)^{\alpha}
-F(y, x+z)-ax^{\alpha}-az^{\alpha}+a(x+z)^{\alpha}\right| \\
=\left|F(x, y+z)+(y+z)^{\alpha}h\left(\frac{y}{y+z}\right)
-F(y, x+z)-(x+z)^{\alpha}h\left(\frac{x}{x+z}\right)\right| \\
\leq\varepsilon_{1}+2\varepsilon_{2}
\end{multline*}
holds for all $x, y, z\in\mathbb{R}_{++}$, where we used inequality (\ref{Eq1.16}) and the
fact that the function $h$ is of the form
\[
h(x)=ax^{\alpha}+a(1-x)^{\alpha}-a.
\quad \left(x\in ]0, 1[\right)
\]
Now Theorem \ref{Thm2.1}. yields, with the choice
\[
A(x, y)=H(y, x) \quad \text{and} \quad B(x, y)=H(x, y), \quad
\left(x, y\in\mathbb{R}_{++}\right)
\]
that there exists a function
$\varphi_{1}:\mathbb{R}_{++}\rightarrow\mathbb{R}$ such that
\[
\left|H(x, y)-\varphi_{1}(x+y)\right|\leq \varepsilon_{1}+2\varepsilon_{2},
\quad \left(x, y\in\mathbb{R}_{++}\right)
\]
that is,
\[
\left|F(x, y)-ax^{\alpha}-ay^{\alpha}-\varphi_{1}(x+y)\right| \leq \varepsilon_{1}+2\varepsilon_{2}.
\quad \left(x, y\in\mathbb{R}_{++}\right)
\]
Finally, \eqref{Eq1.15}, the last inequality and the triangle
inequality imply that
\begin{multline*}
\left|f(x, y, z)-\left[ax^{\alpha}+ay^{\alpha}+az^{\alpha}+\varphi_{1}(x+y+z)\right]\right| \\
\leq
\left|f(x, y, z)-F(x, y+z)-(y+z)^{\alpha}h\left(\frac{y}{y+z}\right)\right| \\
+\left|F(x, y+z)-ax^{\alpha}-a(y+z)^{\alpha}-\varphi_{1}(x+y+z)\right| \\
+\left|(y+z)^{\alpha}h\left(\frac{y}{y+z}\right)-ay^{\alpha}-az^{\alpha}+a(y+z)^{\alpha}\right| \\
\leq (\varepsilon_{1}+\varepsilon_{2})+(\varepsilon_{1}+2\varepsilon_{2})=
2\varepsilon_{1}+3\varepsilon_{2}
\end{multline*}
for all $x, y, z\in\mathbb{R}_{++}$.

Now, let us consider the case $\alpha=0$.
Applying Theorem \ref{Thm2.3}.
we obtain that there exist a logarithmic function
$l:\mathbb{R}_{++}\rightarrow\mathbb{R}$ and $c\in\mathbb{R}$
such that
\begin{equation}\label{Eq3.9}
\left|h(x)-\left[l(1-x)-c\right]\right|\leq 63\left(\varepsilon_{1}+4\varepsilon_{2}\right)
\end{equation}
holds for all $x\in ]0, 1[$.
The approximate symmetry to the point $\frac{1}{2}$ implies however that
\[
\left|l(x)-l(1-x)\right|\leq 126 (\varepsilon_{1}+4\varepsilon_{2})+\varepsilon_{2}.
\quad
\left(x\in ]0, 1[\right)
\]
Let $x=\frac{p}{p+1}$ ($p\in \mathbb{R}_{++}$) in the previous inequality.
Then this implies that
\[
\left|l(p)\right|\leq 126 (\varepsilon_{1}+4\varepsilon_{2})+\varepsilon_{2},
\quad \left(p\in \mathbb{R}_{++}\right)
\]
that is, $l:\mathbb{R}_{++}\rightarrow\mathbb{R}$ is a bounded logarithmic
function.
Although, only the identically zero function has these properties.
Therefore \eqref{Eq3.9} yields that
\[
\left|h(x)-c\right|\leq 63\left(\varepsilon_{1}+4\varepsilon_{2}\right)
\]
for all $x\in ]0, 1[$.

Define the function $H$ on $\mathbb{R}_{++}$ by
\[
H(x, y)=F(x, y)-c,
\quad \left(x, y\in \mathbb{R}_{++}\right)
\]
then
\begin{multline*}
\left|H(x, y+z)-H(y, x+z)\right|
=\left|F(x, y+z)-c-F(y, x+z)+c\right| \\
\leq
\left|F(x, y+z)+h\left(\frac{y}{y+z}\right)
-F(y, x+z)-h\left(\frac{x}{x+z}\right)\right|\\
+\left|h\left(\frac{y}{y+z}\right)-c\right|+
\left|h\left(\frac{x}{x+z}\right)-c\right| \\
\leq(\varepsilon_{1}+2\varepsilon_{2})+2\cdot 63(\varepsilon_{1}+4\varepsilon_{2})=
127\varepsilon_{1}+1010\varepsilon_{2}
\end{multline*}
for all $x, y, z\in\mathbb{R}_{++}$.

Due to Theorem \ref{Thm2.1}., there exists a function
$\varphi_{2}:\mathbb{R}_{++}\rightarrow\mathbb{R}$ such that
\[
\left|H(x, y)-\varphi_{2}(x+y)\right|\leq 127\varepsilon_{1}+1010\varepsilon_{2}.
\quad \left(x, y\in\mathbb{R}_{++}\right)
\]
Again, inequality \eqref{Eq1.15}, the definition of the function $H$ and
the triangle inequality yield that
\begin{multline*}
\left|f(x, y, z)-\varphi_{2}(x+y+z)\right| \\
\leq
\left|f(x, y, z)-F(x, y+z)-h\left(\frac{y}{y+z}\right)\right|\\
+\left|F(x, y+z)-\varphi_{2}(x+y+z)-c\right|+
\left|h\left(\frac{y}{y+z}\right)-c\right| \\
\leq (\varepsilon_{1}+\varepsilon_{2})+(127\varepsilon_{1}+1010\varepsilon_{2})+
63(\varepsilon_{1}+4\varepsilon_{2})=
191\varepsilon_{1}+1263\varepsilon_{2}
\end{multline*}
for all $x, y, z\in\mathbb{R}_{++}$.

Finally, let us consider the case $1\neq \alpha>0$.
Inequality \eqref{Eq1.18} implies that there exist
$a, b\in\mathbb{R}$ such that
\[
\left|h(x)-\left[ax^{\alpha}-b(1-x)^{\alpha}-b\right]\right|\leq
K(\alpha)(\varepsilon_{1}+4\varepsilon_{2}).
\quad \left(x\in ]0, 1[\right)
\]
Furthermore, this inequality with
\[
\left|h(t)-h(1-t)\right|\leq \varepsilon_{2}
\quad \left(t\in ]0, 1[\right)
\]
implies that
\begin{multline*}
\left|(a-b)(x^{\alpha}-(1-x)^{\alpha})\right| \\
\leq
\left|h(x)-\left[ax^{\alpha}-b(1-x)^{\alpha}-b\right]\right|\\
+\left|h(1-x)-\left[a(1-x)^{\alpha}-bx^{\alpha}-b\right]\right|+
\left|h(x)-h(1-x)\right| \\
\leq 2K(\alpha)(\varepsilon_{1}+4\varepsilon_2)+\varepsilon_{2},
\quad \left(x\in ]0, 1[\right)
\end{multline*}
Now taking the limit $x\rightarrow 1-$ , we obtain that
\[
\left|a-b\right|\leq 2K(\alpha)(\varepsilon_{1}+4\varepsilon_2)+\varepsilon_{2}.
\]
Therefore,
\begin{multline*}
\left|h(x)-\left[ax^{\alpha}+a(1-x)^{\alpha}-a\right]\right| \\
\leq
\left|h(x)-\left[ax^{\alpha}+b(1-x)^{\alpha}-b\right]\right|
+\left|a-b\right|\cdot\left|(1-x)^{\alpha}-1\right| \\
\leq K(\alpha)(\varepsilon_{1}+4\varepsilon_{2})+
2K(\alpha)(\varepsilon_{1}+4\varepsilon_{2})+\varepsilon_{2}=
3K(\alpha)(\varepsilon_{1}+4\varepsilon_{2})+\varepsilon_{2}
\end{multline*}
holds for all $x\in ]0, 1[$.

At this point of the proof we define the function $H$ on
$\mathbb{R}^{2}_{++}$ similarly as in case $\alpha<0$, that is, by
\[
H(x, y)=F(x, y)-ax^{\alpha}-ay^{\alpha},
\quad \left(x, y\in\mathbb{R}_{++}\right)
\]
to obtain
\begin{multline}
\left|H(x, y+z)-H(y, x+z)\right| \\
=\left|F(x, y+z)-ax^{\alpha}-a(y+z)^{\alpha}
-F(y, x+z)+ay^{\alpha}+a(x+z)^{\alpha}\right| \\
=\left|F(x, y+z)+ay^{\alpha}+az^{\alpha}-a(y+z)^{\alpha}
-F(y, x+z)-ax^{\alpha}-az^{\alpha}+a(x+z)^{\alpha}\right| \\
\leq\left|F(x, y+z)+(y+z)^{\alpha}h\left(\frac{y}{y+z}\right)
-F(y, x+z)-(x+z)^{\alpha}h\left(\frac{x}{x+z}\right)\right|\\
+\left|(y+z)^{\alpha}h\left(\frac{y}{y+z}\right)-\left[ay^{\alpha}+az^{\alpha}-a(y+z)^{\alpha}\right]\right| \\
+\left|(x+z)^{\alpha}h\left(\frac{x}{x+z}\right)-\left[ax^{\alpha}+az^{\alpha}-a(x+z)^{\alpha}\right]\right| \\
\leq (\varepsilon_{1}+2\varepsilon_{2})
+(y+z)^{\alpha}(3K(\alpha)(\varepsilon_{1}+4\varepsilon_{2})+\varepsilon_{2})
+(x+z)^{\alpha}(3K(\alpha)(\varepsilon_{1}+4\varepsilon_{2})+\varepsilon_{2})\\
=(\varepsilon_{1}+2\varepsilon_{2})
+\left[(x+z)^{\alpha}+(y+z)^{\alpha}\right](3K(\alpha)(\varepsilon_{1}+4\varepsilon_{2})+\varepsilon_{2}),
\quad \left(x, y, z\in\mathbb{R}_{++}\right)
\end{multline}
that is, the right hand side of the inequality depends
on the variables $x, y, z$.
Thus Theorem \ref{Thm2.1}. cannot directly be applied.
Let however $n\in\mathbb{N}$ arbitrarily fixed.
Then the previous inequality implies that
\[
\left|H(x, y+z)-H(y, x+z)\right|\leq
\left(1+3\cdot 2^{\alpha+1}n^{\alpha}K(\alpha)\right)\varepsilon_{1}+
\left(3+3\cdot 2^{\alpha+3}n^{\alpha}K(\alpha)\right)\varepsilon_{2}
\]
holds for all $x, y, z\in ]0, n]$.
For the sake of brevity, let us introduce the notations
\[
a_{n}(\alpha)=1+3\cdot 2^{\alpha+1}n^{\alpha}K(\alpha)
\quad
\text{and}
\quad
b_{n}(\alpha)=3+3\cdot 2^{\alpha+3}n^{\alpha}K(\alpha).
\]
Due to Theorem \ref{Thm2.1}. there exists a function
$\psi_{n}:]0, 3n]\rightarrow\mathbb{R}$ such that
\begin{equation}
\left|H(x, y)-\psi_{n}(x+y)\right|\leq
a_{n}(\alpha)\varepsilon_{n}+b_{n}(\alpha)\varepsilon_{2}
\end{equation}
holds for all $x, y\in ]0, n]$.
Using the definition of the function $H$, this implies that
\[
\left|F(x, y)-\left[ax^{\alpha}+ay^{\alpha}+\psi_{n}(x+y)\right]\right|\leq
a_{n}(\alpha)\varepsilon_{n}+b_{n}(\alpha)\varepsilon_{2} .
\quad
\left(x, y\in ]0, n]\right)
\]
Finally, \eqref{Eq1.15}, the last inequality and the triangle
inequality imply that
\begin{multline*}
\left|f(x, y, z)-\left[ax^{\alpha}+ay^{\alpha}+az^{\alpha}+\psi_{n}(x+y+z)\right]\right| \\
\leq
\left|f(x, y, z)-F(x, y+z)-(y+z)^{\alpha}h\left(\frac{y}{y+z}\right)\right| \\
+\left|F(x, y+z)-ax^{\alpha}-a(y+z)^{\alpha}-\psi_{1}(x+y+z)\right| \\
+\left|(y+z)^{\alpha}h\left(\frac{y}{y+z}\right)-ay^{\alpha}-az^{\alpha}+a(y+z)^{\alpha}\right| \\
\leq (\varepsilon_{1}+\varepsilon_{2})+
(a_{n}(\alpha)\varepsilon_{1}+b_{n}(\alpha)\varepsilon_{n})+
(2^{\alpha}n^{\alpha}K(\alpha)(\varepsilon_{1}+\varepsilon_{2})) \\
=\left(1+a_{n}(\alpha)+2^{\alpha}n^{\alpha}K(\alpha)\right)\varepsilon_{1}+
\left(1+b_{n}(\alpha)+2^{\alpha+2}n^{\alpha}K(\alpha)\right)\varepsilon_{2}
\end{multline*}
for all $x, y, z\in ]0, n]$.
All in all, for all $n\in\mathbb{N}$, there exists a function
$\psi_{n}:]0, 3n]\rightarrow\mathbb{R}$ such that
\[
\left|f(x, y, z)-\left[ax^{\alpha}+ay^{\alpha}+az^{\alpha}+\psi_{n}(x+y+z)\right]\right|
\leq c_{n}(\alpha)\varepsilon_{n}+d_{n}(\alpha)\varepsilon_{2}
\]
holds for all $x, y, z\in ]0, n]$, where
\[
c_{n}(\alpha)=2+7\cdot 2^{\alpha}n^{\alpha}K(\alpha)
\quad
\text{and}
\quad
d_{n}(\alpha)=4+7\cdot 2^{\alpha+2}n^{\alpha}K(\alpha),
\]
as it was stated in our theorem.

\end{proof}

\section{Corollaries and open problems}

With the choice $\varepsilon_{1}=\varepsilon_{2}=0$ we get the general
solutions of equation \eqref{m_eq}, in the investigated cases.

\begin{cor}\label{C4.1}
Let $\alpha\in\mathbb{R}$, $\alpha\neq 1$ and
suppose that the function $f:\mathbb{R}^{3}_{+}\rightarrow\mathbb{R}$
is symmetric and satisfies
functional equation \eqref{m_eq} for all $x, y, z\in\mathbb{R}_{++}$.

Then, in case $\alpha\neq 0$, there exist $a\in\mathbb{R}$ and a function
$\varphi_{1}:\mathbb{R}_{++}\rightarrow\mathbb{R}$ such that
\[
f(x, y, z)=ax^{\alpha}+ay^{\alpha}+az^{\alpha}+\varphi_{1}(x+y+z)
\]
holds for all $x, y, z\in\mathbb{R}_{++}$.

In case $\alpha=0$, there exists a function $\varphi_{2}:\mathbb{R}_{++}\rightarrow\mathbb{R}$
such that
\[
f(x, y, z)=\varphi_{2}(x+y+z)
\]
is fulfilled for all $x, y, z\in\mathbb{R}_{++}$.
\end{cor}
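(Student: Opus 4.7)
The plan is to deduce Corollary \ref{C4.1} directly from Theorem \ref{Thm3.1} by specializing to the exact equation. First I would observe that the hypothesis that $f$ is symmetric means precisely that
\[
f(x,y,z)=f(\sigma(x),\sigma(y),\sigma(z))
\]
for every permutation $\sigma$ of $\{x,y,z\}$, so inequality \eqref{Eq1.14} is satisfied with $\varepsilon_{2}=0$. Similarly, equation \eqref{m_eq} is inequality \eqref{Eq1.13} with $\varepsilon_{1}=0$. Hence the hypotheses of Theorem \ref{Thm3.1} are fulfilled with $\varepsilon_{1}=\varepsilon_{2}=0$, and one only needs to read off each of its three conclusions in this degenerate case.

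In the case $\alpha<0$ the conclusion of Theorem \ref{Thm3.1} reads
\[
\bigl|f(x,y,z)-[ax^{\alpha}+ay^{\alpha}+az^{\alpha}+\varphi_{1}(x+y+z)]\bigr|\le 2\cdot 0+3\cdot 0=0,
\]
so the representation holds as an equality on $\mathbb{R}_{++}^{3}$. The case $\alpha=0$ is handled identically: the bound $191\varepsilon_{1}+1263\varepsilon_{2}$ collapses to $0$, yielding $f(x,y,z)=\varphi_{2}(x+y+z)$ at once.

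The only step requiring a small extra argument is the case $1\neq\alpha>0$, since Theorem \ref{Thm3.1} produces not a single function but a family $\psi_{n}:(0,3n]\to\mathbb{R}$, each valid only on $(0,n]^{3}$. With $\varepsilon_{1}=\varepsilon_{2}=0$ each of these gives the exact equality
\[
f(x,y,z)=ax^{\alpha}+ay^{\alpha}+az^{\alpha}+\psi_{n}(x+y+z)\qquad\bigl(x,y,z\in(0,n]\bigr),
\]
so whenever $m\le n$ and $x,y,z\in(0,m]$ both $\psi_{m}(x+y+z)$ and $\psi_{n}(x+y+z)$ must equal the common value $f(x,y,z)-a(x^{\alpha}+y^{\alpha}+z^{\alpha})$. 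Consequently $\psi_{n}$ and $\psi_{m}$ agree on the overlap of their domains of relevance, so the formula
\[
\varphi_{1}(s)=\psi_{n}(s)\quad\text{for any }n\text{ with }s\in(0,3n]
\]
unambiguously defines a function $\varphi_{1}:\mathbb{R}_{++}\to\mathbb{R}$, and the desired representation $f(x,y,z)=a(x^{\alpha}+y^{\alpha}+z^{\alpha})+\varphi_{1}(x+y+z)$ then holds on all of $\mathbb{R}_{++}^{3}$.

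The only obstacle, and it is a mild one, is precisely this gluing in the positive-$\alpha$ case; everything else is an immediate specialization of Theorem \ref{Thm3.1}. No new functional-equation arguments are required, because the exact solution theory is already embedded in the stability bounds.
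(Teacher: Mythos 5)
Your proof is correct and follows the same route as the paper, which disposes of the corollary in a single line by setting $\varepsilon_{1}=\varepsilon_{2}=0$ in Theorem \ref{Thm3.1}. In fact you are more careful than the paper in the case $1\neq\alpha>0$: the gluing of the family $\psi_{n}$ into one function $\varphi_{1}$ on $\mathbb{R}_{++}$ (well defined because every $s\in\,]0,3m]$ is realized as $s=x+y+z$ with $x=y=z=s/3\in\,]0,m]$, and because the constant $a$ is fixed independently of $n$ in the theorem's proof, as uniqueness of $a$ would also force) is precisely the step that the paper's one-sentence justification leaves implicit.
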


In view of Corollary \ref{C4.1}., our theorem says that the modified entropy equation
is stable in the sense of Hyers and Ulam on its one-dimensional domain with the
multiplicative function $\mu(x)=x^{\alpha}$ ($\alpha\leq 0, x\in\mathbb{R}_{++}$).

In case $1\neq \alpha>0$ we obtain however that functional equation
\eqref{m_eq} is stable on every cartesian product of bounded real
intervals of the form $]0, n]^{3}$ , where $n\in\mathbb{N}$.
Nevertheless, an easy computation shows that
\[
\lim_{n\rightarrow +\infty}c_{n}(\alpha)=+\infty
\quad
\lim_{n\rightarrow +\infty}d_{n}(\alpha)=+\infty.
\quad
\left(1\neq \alpha>0\right)
\]

To the best of our knowledge, this is a new phenomenon in the
stability theory of functional equations.
Since we cannot prove the 'standard' Hyers--Ulam stability in this case,
the following problem can be raised.

\begin{Opp}
Let $\alpha, \varepsilon_{1}, \varepsilon_{2}\in\mathbb{R}$,
$\alpha>0, \varepsilon_{1}, \varepsilon_{2}\geq 0$, and
$f:\mathbb{R}^{3}_{+}\rightarrow\mathbb{R}$ be a function.
Assume that
\[
\left|f(x, y, z)-f(x, y+z, 0)-(y+z)^{\alpha}f\left(0, \frac{y}{y+z}, \frac{z}{y+z}\right)\right|\leq \varepsilon_{1}
\]
and
\[
\left|f(x, y, z)-f\left(\sigma(x), \sigma(y), \sigma(z)\right)\right|\leq \varepsilon_{2}
\]
holds for all $x, y, z\in\mathbb{R}_{++}$ and for all
$\sigma:\left\{x, y, z\right\}\rightarrow\left\{x, y, z\right\}$ permutations.

Is is true that there exists a solution of equation
\eqref{m_eq} $h:\mathbb{R}^{3}_{++}\rightarrow\mathbb{R}$ such that
\[
\left|f(x, y, z)-h(x, y, z)\right|\leq K_{1}\varepsilon_{1}+K_{2}\varepsilon_{2}
\]
holds for all $x, y, z\in\mathbb{R}_{++}$ with some $K_{1}, K_{2}\in\mathbb{R}$?
\end{Opp}

The second open problem that can be raised is the stability problem of
the modified entropy equation itself, i.e.,
equation \eqref{Eq_me_eq}.

\begin{Opp}
Let $\varepsilon_{1}, \varepsilon_{2}\geq 0$, $\mu:\mathbb{R}^{k}_{++}\rightarrow\mathbb{R}$
be a given multiplicative function, $f:\mathbb{R}^{3k}_{+}\rightarrow\mathbb{R}$.
Assume that
\[
\left|f(x, y, z)-f(x, y+z, 0)-\mu(y+z)f\left(0, \frac{y}{y+z}, \frac{z}{y+z}\right)\right|\leq \varepsilon_{1}
\]
and
\[
\left|f(x, y, z)-f\left(\sigma(x), \sigma(y), \sigma(z)\right)\right|\leq \varepsilon_{2}
\]
holds for all $x, y, z\in\mathbb{R}^{k}_{++}$ and for all
$\sigma:\left\{x, y, z\right\}\rightarrow\left\{x, y, z\right\}$ permutation.

Is is true that there exists a solution of equation
\eqref{Eq_me_eq} $h:\mathbb{R}^{3k}_{++}\rightarrow\mathbb{R}$ such that
\[
\left|f(x, y, z)-h(x, y, z)\right|\leq K_{1}\varepsilon_{1}+K_{2}\varepsilon_{2}
\]
holds for all $x, y, z\in\mathbb{R}^{k}_{++}$ with certain $K_{1}, K_{2}\in\mathbb{R}$?
\end{Opp}

\section*{Acknowledgment}
The author is beholden to Professor Gyula Maksa for his valuable advice
during the preparation of the manuscript.

\end{document}